\newcommand{\R}{\mathbb{R}}
\newcommand{\N}{\mathbb{N}}
\newcommand{\Lc}{\mathcal{L}}
\newcommand{\Oc}{\mathcal{O}}
\newcommand{\ee}{\varepsilon}
\newcommand{\lo}{\longrightarrow}
\newcommand{\li}{\left}
\newcommand{\re}{\right}
\newcommand{\nol}{||\,}
\newcommand{\nor}{\,||}
\theoremstyle{plain}
\newtheorem{theorem}{Theorem}
\newtheorem{corollary}[theorem]{Corollary}
\theoremstyle{definition}
\theoremstyle{remark}
\newtheorem{example}[theorem]{Example}
\icmltitlerunning{Sobolev Cubatures strengthen the Approximation Power of Physics Informed Neural Nets}
\begin{document}

\twocolumn[
\icmltitle{Replacing Automatic Differentiation by Sobolev Cubatures fastens Physics Informed Neural Nets and strengthens their Approximation Power}



\icmlsetsymbol{equal}{*}

\begin{icmlauthorlist}
\icmlauthor{Juan Esteban Suarez Cardona}{to}
\icmlauthor{Michael Hecht}{to}
\end{icmlauthorlist}

\icmlaffiliation{to}{CASUS - Center for Advanced System Understanding,  Helmholtz-Zentrum Dresden-Rossendorf e.V. (HZDR), G\"{o}rlitz, Germany}

\icmlcorrespondingauthor{Juan Esteban Suarez Cardona}{j.suarez-cardona@hzdr.de}
\icmlcorrespondingauthor{Michael Hecht}{m.hecht@hzdr.de}

\icmlkeywords{Machine Learning, PDEs, Scientific Computing}

\vskip 0.3in
]



\printAffiliationsAndNotice{}  

\begin{abstract}
We present a novel class of approximations for variational losses, being applicable for the training of physics-informed neural nets (PINNs). The loss formulation reflects classic Sobolev space theory for partial differential equations and their weak formulations.
The loss computation rests on an extension of \emph{Gauss-Legendre cubatures}, we term \emph{Sobolev cubatures}, replacing \emph{automatic differentiation (A.D.)}. We prove the runtime complexity of training the resulting
Soblev-PINNs (SC-PINNs) to be less than required by PINNs relying on A.D.  On top of one-to-two order of magnitude speed-up the SC-PINNs are demonstrated  to achieve closer solution approximations for prominent forward and inverse PDE problems than established PINNs achieve.
%
\end{abstract}

\section{Introduction}

Partial differential equations (PDEs) are omnipresent mathematical models governing the dynamics and (physical) laws of complex systems \cite{Jost,brezis2011}.
However,
analytic PDE solutions are rarely known for most of the systems being the center of current research.
Therefore, there is a strong demand on efficient and accurate numerical solvers and simulations.


Machine learning methods such as:  Physics-Informed GAN \cite{pmlr-v70-arjovsky17a},
Deep Galerkin Method \cite{Sirignano2018DGMAD}, and Physics Informed Neural Networks (PINNs) \cite{RAISSI2019686}, gain big traction in the scientific computing community.
In contrast to classic solvers, PINNs provide a neural net (NN) surrogate model parametrizing the solution space of the PDEs. PINN-learning is given by minimizing a
variational problem, which is typically formulated in $L^2$-loss terms
\begin{equation}\label{L2}
    \int_{\Omega} \|\hat u(x) - u(x)\|^2 d\Omega \approx \frac{1}{|P|}\sum_{p \in P}\|\hat u(p) - u(p)\|^2
\end{equation}
being approximated by mean square errors (MSE) in random nodes $P$, \cite{Yang2020PhysicsInformedGA},\cite{Long2018PDENetLP}.

However, due to the complexity of the underlying non-linear, non-convex variational problem,  theoretical and computational challenges arise when demanding to guarantee convergence to PDE solutions of high accuracy.

\subsection{Related Work}
The importance of the present computational challenge is manifest in the large number of previous works.
Consequently, an exhaustive overview of the literature cannot be given here.
Instead, we restrict ourselves to mentioning those contributions that directly relate to or inspired our work.
\subsubsection{Classic Numerical Methods}
Main classic numerical solvers divide into: Finite Elements \cite{ern2004theory}; Finite Differences \cite{LeVeque2007FiniteDM}; Finite Volumes\cite{bernardi1997spectral}; Spectral Methods \cite{bernardi1997spectral} and Particle Methods \cite{li2007meshfree}. These class of methods provide solutions with high accuracy, but come with the cost of having limited flexibility with respect to the type of problems they can solve. This includes especially inverse problems, as PDE parameter inference, being a hard challenge for the aforementioned methods. In contrast, the variational formulation defining the PINN training provides the desired flexibility, but comes again with the cost of less reachable accuracy.

We focus on two recent approaches addressing the stability and accuracy of PINNs.
\subsubsection{Variational PINNs (VPINNs)}\label{sec:VPINN}
VPINNs were introduced in  \cite{kharazmi2019variational}, \cite{Kharazmi2020hpVPINNsVP}
formulating variational Sobolev losses for PINN-training. The approach relies on exploiting analytic integration and differentiation formulas of shallow neural networks with specified  activation functions.The method is extended by using quadratures and automatic differentiation for computing the losses and complemented by a domain decomposition approach.

\subsubsection{Inverse Dirichlet Loss Balancing}\label{sec:ID}
With the Inverse Dirichlet method \cite{maddu2021} the numerical
stability of PINNS was increased by dynamically balancing the
occurring gradient amplitudes, which if unbalanced
cause numerical stiffness phenomena  \cite{wang2021understanding}. The PINN formulation, however, rests on classic MSE losses.

\subsection{Contribution}
We provide the notion of Sobolev cubatures providing
close approximations of the corresponding Sobolev norms, extending the $L^2$-integrals in Eq~\eqref{L2}.
While classic weak PDE formulations rest on the Sobolev spaces,
our approach enables to
mathematically prove the resulting PINNs to converge to strong solutions of sufficiently regular posed PDE problems. Moreover,
the Sobolev cubatures enable to replace  \emph{automatic differentiation} (A.D.) \cite{baydin2018} by \emph{polynomial differentiation} (P.D.).
Being mostly the bottleneck, for both, runtime and accuracy performance, replacing A.D. by P.D. does not only result in more efficient PINNs, but allows to reach
much closer solution approximations (several magnitudes), as demonstrated herein.

\section{Sobolev Cubatures}
Closer approximations of $L^2$-integrals than reachable by prominent MSE approaches can be derived by applying
classic \emph{Gauss-Legendre cubatures} \citep[see e.g.][]{stroud,stroud2,trefethen2017,trefethen2018,Trefethen2019} or even further extension to what we call \emph{Sobolev cubatures}, presented herein. The notions of this section follow classic concepts of spectral methods \cite{canuto2007spectral,Trefethen2019}. We start by introducing fixing the notation used in the article.

\subsection{Notation}

We consider neural nets (NNs) $\hat u_\theta(\cdot)$ with $m_1,m_2$-dimensional input/output domain $m_1,m_2 \in \N$ of fixed architecture $\Xi_{m_1,m_2}$ (specifying number and depth of the hidden layers, with continuous piecewise smooth activation functions $\sigma(x)$, e.g.  ReLU or ELU. Further, $\Upsilon_{\Xi_{m_1,m_2}}$ denotes the parameter space of the weights (and bias) $\theta =(v,b) \in W=V\times B \subseteq \R^K$, $K \in \N$,
see e.g. \cite{martin2009,goodfellow2016}.


Throughout this article we denote with $\Omega=[-1,1]^m$ the $m$-dimensional \emph{standard hypercube} and with $\|x\|_p = (\sum_{i=1}^m |x_i|^p)^{1/p}$, $x=(x_1,\ldots,x_m) \in \R^m$, $1 \leq p < \infty$, $\|x\|_{\infty} = \max_{1\leq i\leq m} |x_i|$ the $l_p$-norm.
We recommend \cite{Adams2003,brezis2011} for an excellent overview on functional analysis and the Sobolev spaces
$$ H^k(\Omega,\R) = \li\{ f \in L^2(\Omega,\R) : D^\alpha f \in L^2(\Omega,\R)\re\}\,, k\in \N$$
given by all $L^2$-integrable functions $f : \Omega \lo \R$ with existing  $L^2$-integrable weak derivatives $D^\alpha f= \partial^{\alpha_1}_{x_1}\ldots\partial^{\alpha_m}_{x_m}f$ up to order $k$. In fact,
$H^k(\Omega,\R)$ is a Hilbert space with inner product
$$
    \li<f,g\re>_{H^k(\Omega)} =  \sum_{0 \leq \|\alpha\|_1 \leq k}\li<D^\alpha f,D^\alpha g\re>_{L^2(\Omega)}
$$
and norm $\|f\|_{H^k(\Omega)}^2 = \li<f,f\re>_{H^k(\Omega)}$, where
$H^0(\Omega,\R) = L^2(\Omega,\R)$, with $<f,g>_{L^2(\Omega)} = \int \limits_{\Omega}f\cdot g \, d\Omega$.

From the \emph{Trace Theorem}~\citep{Adams2003}, we find furthermore that whenever $H \subseteq \R^m$ is a hyperplane of co-dimension $1$, then the induced restriction
\begin{equation}\label{eq:trace}
  \varrho: H^k(\Omega,\R) \lo H^{k-1/2}(\Omega \cap H,\R)
\end{equation}
is continuous, i.e., $\nol f_{| \Omega \cap H} \nor_{H^{k-1/2}(\Omega\cap H)} \leq d\nol f \nor_{H^k(\Omega)}$  for some $d=d(m,\Omega) \in \R^+$.

Further, $C^k(\Omega,\R)$, $k \in \N \cup\{\infty\}$ denote the
\emph{Banach spaces}
of $k$-times continuously differentiable functions with norm $\|f\|_{C^k(\Omega)} = \sum_{i=0}^k \sup_{x \in \Omega,\|\alpha\|_1 =i}|D^\alpha f(x)|$.

$\Pi_{m,n} = \mathrm{span}\{x^\alpha\}_{\|\alpha\|_\infty \leq n}$  denotes the $\R$-\emph{vector space of all real polynomials} in $m$ variables spanned by all monomials
$x^{\alpha} = \prod_{i=1}^mx_i^{\alpha_i}$ of \emph{maximum degree} $n \in \N$ and
$A_{m,n}=\{\alpha \in \N^m : \|\alpha\|_\infty \leq n\}$ the corresponding multi-index set with
$|A_{m,n}| = (n+1)^m$. We order $A_{m,n}$ with respect to the \emph{lexicographic order} $\preceq$.
Let $D=(d_{i,j})_{1 \leq i,j \leq |A_{m,n}|} \in \R^{|A_{m,n}|\times |A_{m,n}|}$ be a matrix.
We slightly abuse notation by writing
\begin{equation}
 D = (d_{\alpha,\beta})_{\alpha,\beta \in A_{m,n}}  \,,
\end{equation}
with $d_{\alpha,\beta}$ being the $\alpha$-th, $\beta$-th entry of D.

\subsection{Orthogonal Polynomials}

Let $m,n\in \N$ we consider the $m$-dimensional Legendre grids $P_{m,n} = \oplus_{i=1}^m \mathrm{Leg_n} \subseteq \Omega$,
where $\mathrm{Leg}_n=\{p_0,\ldots,p_n\}$ are the $n+1$ \emph{Legendre nodes} given by the roots of the \emph{Legendre polynomials} of degree $n+2$
and denote $p_{\alpha} = (p_{\alpha_1}, \ldots, p_{\alpha_m}) \in P_{m,n}$. It is a classic fact \citep{stroud,stroud2,trefethen2017,Trefethen2019} that the Lagrange polynomials $L_{\alpha}\in \Pi_{m,n}$, $\alpha \in A_{m,n}$  given by
\begin{equation}\label{eq:Lag}
    L_{\alpha} = \prod_{i=1}^ml_{\alpha_i,i}\,, \quad l_{j,i} = \prod_{h \not = j, h=0}^n \frac{x_i -p_h}{p_j-p_h}\,, \end{equation}
$p_h \in \mathrm{Leg}_n$ satisfy $L_{\alpha}(p_\beta)=\delta_{\alpha,\beta}$,
$\forall \alpha,\beta \in A_{m,n}$ and
yield an orthogonal $L^2$-basis of $\Pi_{m,n}$, i.e.,
$$\li<L_{\alpha},L_{\beta}\re>_{L^2(\Omega)}=\int \limits_{\Omega} L_{\alpha}(x)L_{\beta}(x)d\Omega = w_{\alpha} \delta_{\alpha,\beta}\,,$$
$\forall \, \alpha,\beta \in A_{m,n}$, where $\delta_{\cdot,\cdot}$ denotes the \emph{Kronecker delta} and
\begin{equation}\label{eq:GLW}
    w_{\alpha} = \|L_{\alpha}\|^2_{L^2(\Omega)}
\end{equation} the efficiently computable \emph{Gauss-Legendre cubature weight} \cite{stroud,Trefethen2019}.
Consequently, for any polynomial $Q \in \Pi_{m,2n+1}$ of degree $2n+1$ the following cubature rule applies:
\begin{equation}\label{eq:Gauss}
    \int \limits_{\Omega} Q(x)d\Omega = \sum_{\alpha \in A_{m,n}} w_{\alpha}Q(p_{\alpha})\,.
\end{equation}
Summarizing: Polynomials of degree $2n+1$ can be (numerically) integrated exactly when sampled on the Legendre grid $P_{m,n}$ of order $n+1$. Thanks to $|P_{m,n}| = (n+1)^m \ll (2n+1)^m$ this makes \emph{Gauss-Legendre integration} a very powerful integration scheme yielding
\begin{equation}\label{eq:L2}
    \|Q\|^2_{L^2(\Omega)} = \int \limits_{\Omega_m} Q(x)^2d\Omega_m = \sum_{\alpha \in A_{m,n}} Q(p_\alpha)^2 w_\alpha \,.
\end{equation}
for all $Q \in \Pi_{m,n}$.
\subsection{Differential Operators on $\Pi_{m,n}$}
Based on Eq.~\eqref{eq:Lag} we derive exact matrix representations of differential operators on the polynomial spaces $\Pi_{m,n}$ allowing to extend Eq.~\eqref{eq:L2}  to close approximations of the Sobolev norms for general Sobolev functions $f \in H^k(\Omega,\R)$, $k \in \N$.

For $L_\alpha \in \Pi_{m,n}$ from Eq.~\eqref{eq:Lag} and $ 1\leq i \leq m$ the computation of values $d_{\alpha,\beta} = \partial_{x_i} L_\alpha (p_{\beta})$, $p_\beta\in P_{m,n}$, $\forall\,\beta \in A_{m,n}$ yields the Lagrange expansion
\begin{equation}
    \partial_{x_i} L_\alpha (x) = \sum_{\beta \in A_{m,n}} d_{\alpha,\beta}L_{\beta}(x) \,.
\end{equation}
Consequently, the matrix
\begin{equation}\label{eq:DI}
    D_i = (d_{\alpha,\beta})_{\alpha,\beta \in A_{m,n}} \in \R^{|A_{m,n}|\times |A_{m,n}|}\,,
\end{equation}
represents the finite dimensional truncation of the differential operator
$\partial_{x_i} : C^1(\Omega,\R) \lo C^0(\Omega,\R)$ to the polynomial space $\Pi_{m,n}$ and for $\beta \in \N^m$ we set
\begin{equation}\label{eq:Dbeta}
    D_{\beta} = \prod_{j=1}^m D_{\beta_i}
\end{equation}
to be the approximation of $\partial_\beta:=\partial_{x_1}^{\beta_1}\ldots\partial_{x_m}^{\beta_m}$.
As a direct consequence of Eq.~\eqref{eq:L2} the following statement applies:
\begin{theorem}[Sobolev cubatures] \label{theo:SOB} Let $m,n\in \N$, $A_{m,n} \subseteq \N^m$,
$P_{m,n} = \{p_\alpha : \alpha \in A_{m,n}\}$,  be the Legendre grid,  $W_{m,n} = \mathop{diag}(w_{\alpha})_{\alpha \in A_{m,n}}$ be the Gauss-Legendre cubature weight matrix, and $F_{m,n} = (f(p_{\alpha}))_{\alpha \in A_{m,n}}\in \R^{|A_{m,n}|}$. Let
\begin{equation}\label{eq:SCUB}
    W_{m,n,k} = \sum_{\beta \in \N^m, \|\beta\|_1\leq k}  D_{\beta}^T W_{m,n} D_\beta\,,
\end{equation}
with $D_\beta$ from Eq.~\eqref{eq:Dbeta}
be the Sobolev cubature matrix then
  \begin{align}\label{eq:SOBapp}
      \|f\|_{H^k(\Omega)}^2 &\approx  F^T_{m,n} W_{m,n,k}F_{m,n}\,,
  \end{align}
is an exact approximation whenever $f \in \Pi_{m,n}$.
\end{theorem}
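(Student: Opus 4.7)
The plan is to reduce the claim to an application of Gauss--Legendre exactness (Eq.~\eqref{eq:L2}) on each summand in the definition of the Sobolev norm. By definition
\[
\|f\|_{H^k(\Omega)}^2 \;=\; \sum_{\|\beta\|_1 \leq k} \|D^\beta f\|_{L^2(\Omega)}^2,
\]
so the task is to rewrite each $\|D^\beta f\|_{L^2(\Omega)}^2$ as a quadratic form in the node-value vector $F_{m,n}$.

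First I would record two invariance properties for $f \in \Pi_{m,n}$. (i) The derivative $D^\beta f$ again lies in $\Pi_{m,n}$, since partial differentiation only lowers the degree in each variable; consequently $(D^\beta f)^2 \in \Pi_{m,2n}$, which by Eq.~\eqref{eq:Gauss} is integrated exactly by the Legendre cubature, so that Eq.~\eqref{eq:L2} applies. (ii) The Lagrange representation $f = \sum_{\alpha \in A_{m,n}} f(p_\alpha) L_\alpha$, combined with Eq.~\eqref{eq:DI} and Eq.~\eqref{eq:Dbeta}, shows that the matrix $D_\beta$ encodes the action of $\partial_\beta$ on $\Pi_{m,n}$ in the Lagrange basis, and hence that $D_\beta F_{m,n}$ is (up to transpose convention) the grid-value vector $\bigl((D^\beta f)(p_\alpha)\bigr)_{\alpha \in A_{m,n}}$.

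Combining (i) and (ii), Eq.~\eqref{eq:L2} applied to $D^\beta f \in \Pi_{m,n}$ yields
\[
\|D^\beta f\|_{L^2(\Omega)}^2 = \sum_{\alpha \in A_{m,n}} (D^\beta f)(p_\alpha)^2\, w_\alpha = (D_\beta F_{m,n})^T W_{m,n} (D_\beta F_{m,n}) = F_{m,n}^T D_\beta^T W_{m,n} D_\beta F_{m,n}.
\]
Summing over $\|\beta\|_1 \leq k$ and invoking the definition of $W_{m,n,k}$ in Eq.~\eqref{eq:SCUB} delivers the claim, with equality (rather than mere approximation) precisely because $f$ is assumed to be polynomial.

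The main obstacle is step (ii): verifying that the matrix products in $D_\beta$ (Eq.~\eqref{eq:Dbeta}) faithfully represent the mixed partial derivative $\partial_\beta$ on grid values. This reduces to two ingredients: each $D_i$ sends the Lagrange coefficient vector of $g \in \Pi_{m,n}$ to that of $\partial_{x_i} g$, which follows directly from Eq.~\eqref{eq:Lag} and the interpolatory property $L_\alpha(p_\gamma)=\delta_{\alpha,\gamma}$; and the family $\{D_i\}_{i=1}^m$ mutually commutes, which is inherited from the commutativity of $\partial_{x_i}$ on smooth functions (Schwarz's theorem) together with the fact that polynomial differentiation stays inside $\Pi_{m,n}$, so that the exact representations compose without truncation error. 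Once this bookkeeping is settled, the rest of the argument is pure linear algebra.
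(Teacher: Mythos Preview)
Your proposal is correct and matches the paper's approach: the paper does not give a separate proof but introduces the theorem with the phrase ``As a direct consequence of Eq.~\eqref{eq:L2} the following statement applies,'' and your argument is precisely the natural unpacking of that consequence---applying Gauss--Legendre exactness to each $\|D^\beta f\|_{L^2(\Omega)}^2$ after observing that $D^\beta f\in\Pi_{m,n}$ and that $D_\beta$ realizes $\partial_\beta$ on grid-value vectors. Your explicit verification of step~(ii), including the commutativity of the $D_i$, supplies detail that the paper omits; the parenthetical ``up to transpose convention'' is prudent, since the paper's indexing $d_{\alpha,\beta}=\partial_{x_i}L_\alpha(p_\beta)$ leaves the orientation of $D_i$ slightly ambiguous, but this affects only notation and not the substance of the argument.
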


We conclude that Theorem~\ref{theo:SOB}  enables to  control the uniform
distance $\|f -g\|_{C^0(\Omega)}$ on the whole domain $\Omega$.
\begin{corollary}\label{cor:SOB} Let the assumptions of Theorem~\ref{theo:SOB} be fulfilled, and  $f,g \in H^{k}(\Omega,\R)$, $k>m/2$ be two Sobolev functions.
Assume there is $n \in \N$ (large enough)
such that the residuum $f - g \in \Pi_{m,n}$ is given by a polynomial with
$$f(p_\alpha) - g(p_\alpha) = 0 \,, \quad \forall\, p_\alpha \in P_{m,n}\,. $$
Then $f(x) -g(x) = 0$ for all $x \in \Omega =[-1,1]^m$.
\end{corollary}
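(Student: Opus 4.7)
The plan is to set $r := f - g$ and argue that $r$ is identically zero by chaining Theorem~\ref{theo:SOB} with the Sobolev embedding. By hypothesis $r \in \Pi_{m,n}$, so Theorem~\ref{theo:SOB} applies and gives the exact identity
\begin{equation*}
  \|r\|_{H^k(\Omega)}^2 = R_{m,n}^T W_{m,n,k} R_{m,n}\,,
\end{equation*}
where $R_{m,n} = (r(p_\alpha))_{\alpha \in A_{m,n}} \in \R^{|A_{m,n}|}$ is the sampling vector of $r$ on the Legendre grid $P_{m,n}$.

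Next I would exploit the vanishing assumption. Since $r(p_\alpha)=f(p_\alpha)-g(p_\alpha)=0$ for every $p_\alpha \in P_{m,n}$, the vector $R_{m,n}$ is the zero vector, and therefore the quadratic form collapses: $\|r\|_{H^k(\Omega)}^2 = 0$. In particular $\|r\|_{L^2(\Omega)} = 0$, forcing $r = 0$ almost everywhere on $\Omega$. Because $r$ is a polynomial, hence continuous, the almost-everywhere zero set is actually all of $\Omega$, so $r(x) = 0$ for every $x \in \Omega$.

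Finally, to translate this into the pointwise statement $f(x) - g(x)=0$ for every $x \in \Omega$, I would invoke the Sobolev embedding $H^k(\Omega,\R) \hookrightarrow C^0(\Omega,\R)$, valid precisely under the assumption $k > m/2$. This guarantees that $f$ and $g$ admit continuous representatives, so the equality $f - g = r \equiv 0$ is meaningful pointwise and not merely in the $L^2$-equivalence-class sense.

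I do not anticipate a substantive obstacle here; the role of Theorem~\ref{theo:SOB} is to collapse the $H^k$-seminorm information into a finite grid sampling, and the only subtle point is the use of $k > m/2$ to upgrade the a.e. identity, inherited from the $L^2$-part of the Sobolev cubature, to a genuine pointwise identity via continuous representatives. Everything else is a direct substitution into the exact cubature formula stated in Theorem~\ref{theo:SOB}.
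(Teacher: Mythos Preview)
Your proof is correct and follows essentially the same route as the paper: apply Theorem~\ref{theo:SOB} to the polynomial residuum $r=f-g$ to obtain $\|f-g\|_{H^k(\Omega)}=0$, then invoke the Sobolev embedding $H^k(\Omega,\R)\hookrightarrow C^0(\Omega,\R)$ (valid since $k>m/2$) to conclude $\|f-g\|_{C^0(\Omega)}=0$. Your intermediate step---using the polynomial's own continuity to upgrade $r=0$ a.e.\ to $r\equiv 0$ before invoking the embedding for $f$ and $g$ separately---is a harmless elaboration that the paper skips by applying the embedding directly to $f-g$.
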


\begin{proof} Due to Theorem~\ref{theo:SOB} we deduce
$\|f-g\|_{H^k(\Omega)} = 0$. While the Sobolev Embedding Theorem \cite{Adams2003} yields the continuous inclusion $H^k(\Omega,\R) \subseteq C^0(\Omega,\R)$, consequently, we  realize that $\|f-g\|_{C^0(\Omega)}=0$, proving the claimed identity.
\end{proof}


In light of the provided perspectives, we propose the following formalizations of classic PDE problems.

\section{Strong and weak PDE formulations}\label{Poisson_formulations}

We follow \cite{Jost,brezis2011} to restate classic (weak) PDE formulations and their Sobolev cubature approximations. For the sake of simplicity, we focus on the example of the classic \emph{Poisson equation}.

\subsection{Poisson equation}
For $f \in C^0(\Omega,\R)$ the \emph{strong Poisson problem} with Dirichlet boundary condition $g \in C^0(\partial \Omega,\R)$ seeks for solutions $u\in C^2(\Omega,\mathbb{R})$ fulfilling:
\begin{equation}\label{eq:PP}
   \li\{ \begin{array}{rll}
       -\Delta u(x) -f(x) &= 0  &,  \forall x\in\Omega  \\
         u(x)  -g(x)      &= 0  &,  \forall x\in\partial\Omega\,.
\end{array}\re.
\end{equation}

We can weaken the initial regularity assumptions by  demanding $u\in H^2(\Omega,\mathbb{R})$ to satisfy the PDE in the integral sense, yielding the \emph{strong variational formulation}:
\begin{equation}\label{eq:strong_var}
    \li\{\begin{array}{rll}
        \int \limits_{\Omega}(-\Delta u + f) \phi\, d\Omega &= 0   &,
        \forall \phi \in \Gamma(\Omega,\mathbb{R}) \\
        \int \limits_{\partial\Omega}(u - g)\phi \,dS &= 0  &,
        \forall \phi \in \Gamma(\partial\Omega,\mathbb{R})\,,
    \end{array}\re.
\end{equation}
where $\Gamma(\Omega,\R) = \{\phi \in C^{\infty}(\Omega,\R) : \|\phi\|_{C^0(\Omega)} \leq 1\}$ denotes the space of \emph{test functions}.
We can weaken the regularity assumptions even more by  imposing $u\in{H^1}(\Omega,\mathbb{R})$ to satisfy the following \emph{weak variational formulation}:
\begin{equation}\label{eq:weak_var}
    \li\{\begin{array}{rl}
        \int\limits_{\Omega}(\nabla u \cdot \nabla\phi + f\phi) d\Omega -
        \int \limits_{\partial\Omega}(\nabla u \phi)\eta dS &= 0 \\
\int \limits_{\partial\Omega}(u - g)\phi dS &= 0\,,
    \end{array}\re.
\end{equation}
where we applied integration by parts and $\eta$ denotes the (piecewise smooth) normal field of $\partial \Omega$.

\subsection{Residual loss in terms of Sobolev cubatures}
We translate the introduced PDE formulations into \emph{variational optimization problems} demanded to be minimized by the PINNs framework. In addition to Sobolev cubature matrix $W_{m,n,k}$ from Eq.~\eqref{eq:SCUB}
the matrices
\begin{align}
    U_{m,n,k} &= \sum_{\beta \in \N^m, \|\beta\|_1 \leq k} D_\beta^TW_{m,n}^2D_\beta\,,
    \label{eq:UMNK}\\
    V_{m-1,n,k-1/2} &= \sum_{\beta \in \N^{m-1}, \|\beta\|_1 \leq k-1/2} D_\beta^TW_{m-1,n}^2D_\beta \,, \nonumber
\end{align}
relying on Theorem~\ref{theo:SOB}, are the key ingredient in this regard.

The {\bf strong residual loss} $\Lc_{\mathrm{strong}} : C^2(\Omega,\R) \lo \R$, relies on the residuals
\begin{align*}
\Lc_{\mathrm{strong}}(u) &= r_{\mathrm{strong},0}(u) + s_{\mathrm{strong},0}(u)\\
   &= \|\Delta u +f \|^2_{L^2(\Omega)} + \| u_{| \partial \Omega} -g \|^2_{L^2(\partial\Omega)}\,,
\end{align*}
which we extend to  $\Lc_{\mathrm{strong},k}$, $k\geq 1/2$ with
\begin{align}
\Lc_{\mathrm{strong},k}(u) &= r_{\mathrm{strong},k}(u) + s_{\mathrm{strong},k}(u)\label{eq:strong}\\
   &= \|\Delta u +f \|^2_{H^k(\Omega)} + \| u_{| \partial \Omega} -g \|^2_{H^{k-1/2}(\partial\Omega)}\,, \nonumber
\end{align}
where the $H^{k-1/2}$-metric of the second residual $s_{\mathrm{strong},k}$ reflects the \emph{Trace Theorem} (Eq.~\eqref{eq:trace}).
We propose to approximate $\Lc_{\mathrm{strong},k}(u)  \approx r_{\mathrm{strong},n,k}(u) + s_{\mathrm{strong},n,k}(u)$ by the following Sobolev cubatures:
\begin{align}
r_{\mathrm{strong},n,k}(u) &=R_{m,n}^T W_{m,n,k} R_{m,n}\,,  \label{eq:strong_loss}  \\
s_{\mathrm{strong},n,k}(u) &= S_{m-1,n}^{T} V_{m-1,n,k-1/2} S_{m-1,n}\,.
\end{align}

Thereby, $V_{m-1,n,k-1/2} = W_{m-1,n}$ for $0\leq k <1/2$, and
\begin{equation}\label{eq:R}
R_{m,n}  = -D_{(2,\ldots,2)}(\hat u (p_\alpha,w))_{\alpha \in A_{m,n}} - F_{m,n}\,,
\end{equation}
where $F_{m,n} = (f(p_{\alpha}))_{\alpha \in A_{m,n}}$ and
$D_{(2,\ldots,2)} = D_1^2 + \ldots + D_m^2 \approx \Delta$ denotes the polynomial approximation of the Laplacian accordingly to Eq.~\eqref{eq:DI}. Thus,
$R_{m,n} \in \R^{|A_{m,n}|}$ yields an approximation of
$\big(\Delta u(p_{\alpha}) + f(p_{\alpha})\big)_{\alpha \in A_{m,n}}$ by replacing \emph{automatic differentiation} \cite{baydin2018} with  \emph{polynomial differentiation}.

Moreover,  by summing the residual values
$S_{m-1,n}^{\pm j} = (u(p^{\pm j}_{\alpha},w) - g(p^{\pm j}_{\alpha}))_{\alpha \in A_{m-1,n}}$
over each face $\partial\Omega_{j}^{\pm} =\{x \in \Omega : x_j = \pm 1\}$ of $\Omega$ we denote the boundary residual as
\begin{equation}
S_{m-1,n} = \sum_{ j=0}^{m}S_{m-1,n}^{\pm j}  \in \R^{|A_{m-1,n}|}\,.\label{eq:SMN}
\end{equation}

The {\bf strong variational loss} $\Lc^{\mathrm{var}}_{\mathrm{strong},k} : H^{2+k}(\Omega,\R) \lo \R$ is given by
$\Lc^{\mathrm{var}}_{\mathrm{strong},k}(u) = r^{\mathrm{var}}_{\mathrm{strong},k}(u) +s^{\mathrm{var}}_{\mathrm{strong},k}(u)$ with
\begin{align}
 r^{\mathrm{var}}_{\mathrm{strong},k}(u) &:=
\sup_{\phi \in \Gamma(\Omega,\R)} \li <-\Delta u-f,\phi \re>^2_{H^k(\Omega)} \label{eq:strongvar} \\
s^{\mathrm{var}}_{\mathrm{strong},k}(u)&= \sup_{\phi \in \Gamma(\partial\Omega,\R)} \li <u-g,\phi \re>^2_{H^{k-1/2}(\Omega)}\,, \nonumber
\end{align}
where the $H^{k-1/2}$-metric of the second residual reflects again the \emph{Trace Theorem} (Eq.~\eqref{eq:trace}). Replacing the test functions $\phi$  with
the Lagrange basis $L_{\alpha} \in \Pi_{m,n}$, $\alpha \in A_{m,n}$ for the Legendre nodes $p_{\alpha} \in P_{m,n}$ from Eq.~\eqref{eq:Lag}
yields the Sobolev cubature approximation
$\Lc^{\mathrm{var}}_{\mathrm{strong},k}(u)  \approx \Lc^{\mathrm{var}}_{\mathrm{strong},n,k}(u)$:
\begin{align*}
r^{\mathrm{var}}_{\mathrm{strong},n,k}(u)&= R_{m,n}^T U_{m,n,k} R_{m,n}\,,\\
    s^{\mathrm{var}}_{\mathrm{strong},n,k}(u) &= S_{m-1,n}^T V_{m-1,n,k-1/2} S_{m-1,n}\,,
\end{align*}
where $R_{m,n}$, $S_{m-1,n}$ are as in  Eq.~\eqref{eq:R},\eqref{eq:SMN}.

The {\bf weak variational loss} $\Lc^{\mathrm{var}}_{\mathrm{weak},k}: H^{1+k}(\Omega,\R) \lo \R$ given by
$\Lc^{\mathrm{var}}_{\mathrm{weak},k}= r^{\mathrm{var}}_{\mathrm{weak},k}(u) + s^{\mathrm{var}}_{\mathrm{strong},k}(u)$ only differs in the first residual
$r^{\mathrm{var}}_{\mathrm{weak},k}(u)$ from $\Lc^{\mathrm{var}}_{\mathrm{strong},k}$  by
\begin{align*}
r^{\mathrm{var}}_{\mathrm{weak},k}(u) & = \sup_{\phi \in \Gamma(\Omega,\R)}\Big( \li <\nabla u,\nabla\phi\re>_{H^k(\Omega)} +\li <f,\phi\re>_{H^k(\Omega)}\\
&- \li <\nabla u\phi, \eta\re>_{H^{k-1/2}(\partial\Omega)}\Big)\,.
\end{align*}
Yielding the approximates $\Lc^{\mathrm{var}}_{\mathrm{weak},k}(u) \approx \Lc^{\mathrm{var}}_{\mathrm{weak},n,k}(u)$ due to:
\begin{align*}
\sup_{\phi \in \Gamma(\Omega,\R)} \li <\nabla u,\nabla\phi\re>_{H^k(\Omega)} &\approx  H_{m,n}^T U_{m,n,k} H_{m,n}\\
\sup_{\phi \in \Gamma(\Omega,\R)}\li <f,\phi\re>_{H^k(\Omega)}
&\approx F_{m,n}^T U_{m,n,k} F_{m,n} \\
\sup_{\phi \in \Gamma(\Omega,\R)}\li <\nabla u\phi, \eta\re>_{H^{k-1/2}(\partial\Omega)}
&\approx G_{m-1,n}^T V_{m-1,n,l} G_{m-1,n},
\end{align*}
where $H_{m,n}= \sum_{i=1}^mD_i^TD_i (u(p_{\alpha}))_{\alpha \in A_{m,n}}$, $F_{m,n}= (f(p_{\alpha}))_{\alpha \in A_{m,n}}$,
$G_{m-1,n} = e_{\alpha}^T \sum_{i=1}^m D_i  (u(p_{\alpha}))_{\alpha \in A_{m,n}}$ and $U_{m,n,k}$, $V_{m-1,n,l}$, $l =k-1/2$ are the Sobolev cubature matrices from Eq.~\eqref{eq:UMNK}.

Summarizing, the given notions allow to extend Corollary~\ref{cor:SOB} in order to state the following result.
\begin{theorem}[Strong solution approximation]\label{theo:ss} Let $m \in \N$, $u \in C^2(\Omega,\R)$, $u \in H^{l+k}(\Omega, \R)$, $k\geq 0$, $l=1,2$ be a regular $m$-variate (Sobolev) function.
\begin{enumerate}
\item[i)] If the losses vanish, $\Lc_{\mathrm{strong},k}(u)=0$, $\Lc^{\mathrm{var}}_{\mathrm{strong},k}(u)=0$, or $\Lc^{\mathrm{var}}_{\mathrm{weak},k}(u) =0$ then $u$
is a strong solution of the Poisson equation, respectively, i.e, $u \in C^2(\Omega,\R)$ and solves the strong Poisson problem.
\item[ii)] Denoting with $\Lc_{\mathrm{strong},n,k}$, $\Lc^{\mathrm{var}}_{\mathrm{strong},n,k}$, $\Lc^{\mathrm{var}}_{\mathrm{weak},n,k}$ the loss approximations of degree $n \in \N$ as given above. Then for all $\ee>0$ there is $n =n(\ee)$ and $u' \in \Pi_{m,n}$ with $\|u -u'\|_{C^{2}(\Omega)}<\ee$, $\|u -u'\|_{H^{l+k}(\Omega)}<\ee$ such that
$\Lc_{\mathrm{strong},n}(u') = \Lc_{\mathrm{strong}}(u')$,
$\Lc^{\mathrm{var}}_{\mathrm{strong},n,k}(u') = \Lc^{\mathrm{var}}_{\mathrm{strong},k}(u')$,
$ \Lc^{\mathrm{var}}_{\mathrm{weak},n,k}(u') = \Lc^{\mathrm{var}}_{\mathrm{weak},k}(u')$.
\end{enumerate}
\end{theorem}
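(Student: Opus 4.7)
The plan splits along the two parts of the theorem. For part (i), I will exploit that each loss is a sum of two manifestly non-negative residual terms (one interior, one boundary), so vanishing of the total forces vanishing of each separately. In the case of $\Lc_{\mathrm{strong},k}$ this immediately gives $\Delta u+f=0$ as an element of $H^k(\Omega,\R)$ and $u_{|\partial\Omega}-g=0$ in $H^{k-1/2}(\partial\Omega,\R)$; the regularity $u\in C^2(\Omega,\R)$ upgrades these identities to the pointwise strong Poisson system of Eq.~\eqref{eq:PP}. For $\Lc^{\mathrm{var}}_{\mathrm{strong},k}$ the suprema over $\phi\in\Gamma(\Omega,\R)$ of the squared Sobolev pairings vanish; because (scaled) test functions are dense in $H^k$, Riesz representation forces the corresponding residual itself to vanish, reducing to the previous case. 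For $\Lc^{\mathrm{var}}_{\mathrm{weak},k}$ I would invoke integration by parts in the opposite direction: since $u\in C^2$, the boundary fluxes are well defined and exactly cancel, so the weak variational residual coincides with the strong variational one, which has already been treated.

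For part (ii) the strategy is a density-plus-exactness argument. First I invoke a polynomial density statement on the hypercube (Weierstrass in $C^2$, together with spectral/Jackson-type approximation in $H^{l+k}$, applicable since $\Omega=[-1,1]^m$ is a tensor-product cube) to produce, for any $\ee>0$, an index $n=n(\ee)$ and a polynomial $u'\in\Pi_{m,n}$ satisfying the two stated approximation bounds simultaneously. Second, I use the exactness supplied by Theorem~\ref{theo:SOB}: for $u'\in\Pi_{m,n}$ (and taking $n$ large enough so that $f$, $g$ are likewise polynomials of degree at most $n$, or replacing them by their Legendre interpolants), the residual $-\Delta u' - f$ lies in $\Pi_{m,n}$, the matrix $D_{(2,\ldots,2)}$ reproduces $\Delta u'$ exactly at the grid nodes, so $R_{m,n}$ collects the exact pointwise residuals, and $R_{m,n}^TW_{m,n,k}R_{m,n}=\|\Delta u'+f\|_{H^k(\Omega)}^2$. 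The same reasoning on $\partial\Omega$ via $V_{m-1,n,k-1/2}$ handles the boundary term, proving $\Lc_{\mathrm{strong},n,k}(u')=\Lc_{\mathrm{strong},k}(u')$ and in particular $\Lc_{\mathrm{strong},n}(u')=\Lc_{\mathrm{strong}}(u')$.

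For the two variational losses I would observe that when the residual expression sits in $\Pi_{m,n}$, its Lagrange expansion on $P_{m,n}$ is exact; hence the suprema over $\phi\in\Gamma$ are attained (up to normalisation) inside $\mathrm{span}\{L_\alpha\}_{\alpha\in A_{m,n}}$, and each occurring pairing $\langle\cdot,\cdot\rangle_{H^k(\Omega)}$ or $\langle\cdot,\cdot\rangle_{H^{k-1/2}(\partial\Omega)}$ is reproduced exactly by the Sobolev cubature matrices $U_{m,n,k}$ and $V_{m-1,n,k-1/2}$ of Eq.~\eqref{eq:UMNK}. Combining polynomial exactness of $\Delta$, $\nabla$ and the trace with Theorem~\ref{theo:SOB} then yields the remaining two identities $\Lc^{\mathrm{var}}_{\mathrm{strong},n,k}(u')=\Lc^{\mathrm{var}}_{\mathrm{strong},k}(u')$ and $\Lc^{\mathrm{var}}_{\mathrm{weak},n,k}(u')=\Lc^{\mathrm{var}}_{\mathrm{weak},k}(u')$.

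The main obstacle I anticipate is the variational part of step (ii): making precise that on polynomial arguments the supremum over the smooth test-function class $\Gamma$ is faithfully represented by the Sobolev cubature, which really requires viewing $U_{m,n,k}$ as the Gram matrix of the $H^k$-inner product restricted to $\Pi_{m,n}$ and carefully controlling the $C^0$-normalisation of the Lagrange basis. A second, smaller difficulty is the treatment of $f$ and $g$: to obtain \emph{exact} equalities (rather than limits as $n\to\infty$) one either assumes polynomial data or tacitly identifies $f,g$ with their Legendre interpolants, and I would state explicitly which convention I adopt before running the cubature-exactness computation.
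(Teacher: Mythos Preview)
Your proposal is correct and follows essentially the same route as the paper: part (i) is dispatched by classical PDE theory (non-negativity of the summands, density of test functions, and integration by parts back from the weak to the strong formulation), while part (ii) rests on polynomial density in $C^2$ and $H^{l+k}$ (the paper phrases this via Stone--Weierstrass with bootstrapping and density of $C^\infty$ in $H^{l+k}$) followed by the exactness of the Sobolev cubatures on $\Pi_{m,n}$ from Theorem~\ref{theo:SOB}. Your treatment is in fact more careful than the paper's short proof, and the two subtleties you flag --- the handling of the variational supremum and the implicit polynomial assumption on $f,g$ needed for \emph{exact} equality --- are genuine points the paper glosses over.
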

\begin{proof}Statement $i)$ reflects classic PDE theory \cite{Jost,brezis2011}. $ii)$ follows from
$H^{l+k}(\Omega,\R) \subseteq C^2(\Omega,\R)$ and
the Stone-Weierstrass theorem \cite{weier1,weier2}, stating that any continuous function $f \in C^0(\Omega,\R)\supseteq C^2(\Omega,\R)$ can be uniformly approximated by polynomials.
By a bootstrapping argument that implies that polynomials are dense in all $C^r(\Omega,\R)$, $r \in \N\cup\{\infty\}$ with respect to the corresponding norms $\|\cdot\|_{C^r(\Omega)}$.
While $C^\infty(\Omega,\R) \subseteq H^{l+k}(\Omega,\R)$ is dense \cite{Adams2003},
polynomial approximations $\|u -u'\|_{H^{l+k}(\Omega)}<\ee$ exist.
Due to Theorem~\ref{theo:SOB} the Sobolev cubature losses $\Lc_{\mathrm{strong},n}(u')$,       $\Lc^{\mathrm{var}}_{\mathrm{strong},n,k}(u')$,
$\Lc^{\mathrm{var}}_{\mathrm{weak},n,k}(u')$
are exact for $u' \in \Pi_{m,n}$ proving the statement.
\end{proof}

We conclude, that apart from the weaker regularity assumptions on the solution $u : \Omega \lo \R$ required by the variational losses, Theorem~\ref{theo:ss}  theoretically guarantees approximations of strong solutions whenever minimization (by PINN training)
reaches $\Lc_{\mathrm{strong},n,k}(u'),    \Lc^{\mathrm{var}}_{\mathrm{strong},n,k}(u'),
\Lc^{\mathrm{var}}_{\mathrm{weak},n,k}(u')\approx 0$ sufficient small losses for $n \in \N$ large enough.

\section{Gradient Flow of PINN Training}
For a given PINN $u =\hat u(\cdot,w) \in \Xi_{m,1}$, of fixed architecture,
(optimally) adjusted weights $w \in \Upsilon_{\Xi_{2,1}}$ are demanded,
minimizing the loss. NN training is realized as a gradient descent, given by solving the \emph{gradient flow ODE}
\begin{align}
 \partial_t w  &= - \delta_w \Lc(\hat u(\cdot,w(t))     w_0\,,\\
  &= - \nabla \Lc(\hat u(\cdot,w(t))\cdot \delta_w  \hat u(\cdot,w(t))w_0        \quad  \,, w(0) = w_0\,,\nonumber
\end{align}
where $w_0$ is given by the NN initialization and $\delta_w$ denotes the variation in the weights.

For the proposes Sobolev loss approximations
$\Lc_{\mathrm{strong},n}(u'),\Lc^{\mathrm{var}}_{\mathrm{strong},n}(u'),\Lc^{\mathrm{var}}_{\mathrm{weak},n}(u')$
the gradients simplify due to the identities:
\begin{align}
   \nabla_w(r_{\mathrm{strong},n,k}(u)) & = 2R_{m,n}^T W_{m,n,k} \nonumber\\
   \nabla_w(r^{\mathrm{var}}_{\mathrm{strong},n,k}(u)) & = 2R_{m,n}^T U_{m,n,k}\nonumber\\
     \nabla_w(r^{\mathrm{var}}_{\mathrm{weak},n,k}(u)) & = 2(H_{m,n}^T + F_{m,n}^T)U_{m,n,k}\nonumber\\
     &+ 2(G_{m,n}^T )V_{m-1,n,k-1/2}\nonumber\\
   \nabla_w(s_{\mathrm{strong},n}(u))  & = 2R_{m,n}^T W_{m-1,n} \nonumber\\
   \nabla_w(s^{\mathrm{var}}_{\mathrm{strong},n,k}(u)) & = 2R_{m,n}^T V_{m-1,n,k-1/2}\,, \label{eq:GRAD}
\end{align}
where $W_{m,n,k},U_{m,n,k}$, $V_{m-1,n,k-1/2}$ are Sobolev cubature matrices from  Eq.~\eqref{eq:SCUB},\eqref{eq:UMNK}.

We further investigate the Sobolev cubature properties in comparison to A.D. based PINNs below.

\section{Polynomial Differentiation vs Automatic Differentiation}\label{sec:ADPD}
One of the main features of using the Sobolev cubatures, compared to the MSE loss, is that we can replace the problem of computing the derivatives of the PINN-surrogate, by computing them directly in the cubature. Below we present the complexity analysis for both, the normal PINN with \emph{Automatic Differentiation (A.D.)} and the SC-PINN with \emph{Polynomial Differentiation (P.D.).}
 \begin{theorem}\label{theo:complex}
For a given deep Neural Network $\hat{u}_\theta : \Omega \rightarrow \R$, with architecture $\xi_{m,1}$ consisting of $l$ hidden layers and $q$ neurons per layer, the complexity per epoch for computing the $k-$th derivative ($\partial_x^k \hat{u}_\theta$) in $s \in \N$ training points is given by
\begin{itemize}
    \item[i)]  $\Oc(2^{k-1}lsq^2)$ for a PINN resting on A.D., i.e. it scales exponentially with the order of the derivative.
    \item[ii)] $\Oc(\max\{s^2,lsq^2\})$ for the SC-PINN using P.D.
\end{itemize}
\end{theorem}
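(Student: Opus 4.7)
The plan is to split one epoch into two stages common to both methods: (a) evaluating the NN $\hat{u}_\theta$ at the $s$ training points, and (b) differentiating the resulting values up to order $k$. Stage (a) is a sequence of $l$ dense $q\times q$ matrix-vector products per point, so it costs $\Oc(lq^2)$ per point and $\Oc(lsq^2)$ in total, regardless of which differentiation scheme is used afterwards. Having isolated this common cost, the two claims reduce to bounding the marginal cost of stage (b) in each setting.

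For part (i), I would invoke the cheap-gradient principle for reverse-mode A.D., which guarantees that computing $\partial_x\hat{u}_\theta$ at all $s$ points is within a constant factor of the forward pass, i.e.\ $\Oc(lsq^2)$. Higher-order derivatives are then handled by induction on $k$: computing $\partial_x^{k}\hat{u}_\theta$ by A.D.\ requires applying reverse-mode differentiation to the computational graph that already encodes $\partial_x^{k-1}\hat{u}_\theta$, and this augmented graph must retain both the forward nodes and all previously-introduced gradient nodes. Each additional differentiation therefore at least doubles the graph size, so iterating $k-1$ times on the base cost $\Oc(lsq^2)$ yields $\Oc(2^{k-1}lsq^2)$.

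For part (ii), the Legendre grid $P_{m,n}$ furnishes the $s=|A_{m,n}|$ training points, and once $\hat{u}_\theta$ has been evaluated on it at cost $\Oc(lsq^2)$, Eq.~\eqref{eq:Dbeta} reduces the computation of any derivative $\partial_\beta\hat{u}_\theta$ at the grid nodes to a single matrix-vector product by the $s\times s$ matrix $D_\beta$, costing $\Oc(s^2)$. Crucially, the matrices $D_1,\dots,D_m$ depend only on the grid and are precomputed once; assembling $D_\beta$ from them is a one-off offline cost that does not enter the per-epoch budget, and the exponent $k$ only affects the number of such $\beta$'s, which I would treat as a constant. Adding the forward and differentiation costs gives $\Oc(lsq^2)+\Oc(s^2)=\Oc(\max\{s^2,lsq^2\})$, as claimed.

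The main obstacle is pinning down the $2^{k-1}$ factor for A.D.\ rigorously: the cheap-gradient theorem only yields a constant-factor blow-up per differentiation, and the precise value of that constant depends on which elementary operations appear in $\hat{u}_\theta$ and on how higher-order derivatives are implemented (nested \texttt{grad}, forward-over-reverse, etc.). A careful induction that tracks the growth of the augmented computation graph — forward nodes together with all gradient nodes of order below $k$ — and that uses the fact that the backward sweep traverses at least every node of the forward sweep, delivers the stated exponential dependence and is consistent with the empirical doubling behaviour reported in the A.D.\ literature.
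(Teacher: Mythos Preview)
Your proposal is correct and follows essentially the same argument as the paper: the forward pass costs $\Oc(lsq^2)$, each reverse-mode differentiation doubles the dependency graph yielding the $2^{k-1}$ factor, and for P.D.\ the precomputed $s\times s$ differentiation matrix reduces the derivative step to an $\Oc(s^2)$ matrix-vector product. Your treatment is in fact slightly more careful than the paper's, particularly in flagging that the exact constant $2$ per differentiation is heuristic rather than a sharp consequence of the cheap-gradient theorem.
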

\begin{proof}
For proving $i)$ we use the fact that a single backward pass required for computing the derivatives, has the same complexity as a forward pass, which is $O(lsq^2)$ due to \cite{JMLR:v18:17-468}. thus, for computing the first derivative at $s$ points we need $O(lsq^2)$ operations. Due to the chain rule, computation of the second order derivatives causes the size of the dependency graph of the A.D. to double. By recursion of this fact the factor $2^{k-1}$ appears as claimed.

For proving $ii)$ we recall that the SC-PINN computes the $k-$th derivative by applying the pre-computed differential matrix $\mathbb{D}^k:=\Pi_{l=1}^k\mathbb{D}$. Hence, for each epoch the costs of a matrix vector multiplication $\mathbb{D}^k\hat{u}_\theta$ apply at each cubature (Legendre) node $p \in P_{m,n}$, with $s =|P_{m,n}|$.  The product has $O(s^2)$ complexity per epoch and the evaluation $\hat{u}_\theta(p)$, $\forall p \in P_{m,n}$ has $O(lsq^2)$ complexity, yielding the result.
\end{proof}
\begin{example}\label{ex:AD}
We consider a NN  with architecture $\Xi_{1,1}= \{1,50,50,50,50,1\}$ given by four hidden layers of length $50$. For $s=200$ training points computation of a $4$-th order derivative computed with A.D. has a theoretical computational cost of $1.6\cdot 10^7$ operations while P.D. requires $2\cdot 10^{6}$ operations. In fact, the predicted
factor-$10$-speed-up is achieved for the experiment in Section~\ref{sec:exAD}.
\end{example}
In addition to the derivative computation complexity, the SC-PINN formulation exploits the approximation power of the Gauss-Legendre cubature, as it becomes observable in the numerical experiments, Section~\ref{sec:NUM}.

Here, we want to point out the following insight:
We consider the $m$-dimensional integral $I[f]:=\int_\Omega f d\Omega$ of a
$k$-times differentiable function once approximated by the Gauss-Legende cubature of degree $n$, $I_n^g[f^2] := \sum_{\alpha \in A_{m,n}}f^2(p_\alpha)\omega_\alpha$ and once by the Monte Carlo approximation $I_n^M[f]:= \frac{1}{N}\sum_{i\in I}f^2(x_i)$, with $K\subset \mathbb{R}^m$ a set of points of size $|K|=|A_{m,n}|$. Due to \cite{trefethen2017} the approximations rates scale as:
\begin{align*}
    |I[f]-I^g_n[f]|&=\Oc\biggr(\frac{1}{k(n-k)^k}\biggr)\,,\\
    |I[f]-I^M_n[f]|&=\Oc\biggr(\frac{1}{\sqrt{K}}\biggr)\,, \quad  K =(n+1)^m\,.
\end{align*}
Hence, for regular functions, $k \gg m$, achieving a similar accuracy with the SC-PINNs requires less points compared to applying PINNs with MSE-losses. The limitations of the Sobolev cubatures start as the dimension $m\gg1$ of the domain becomes too high and the complexity in Theorem~\ref{theo:complex}, $i)$ is dominated by the $\Oc(s^2)$, $s =(n+1)^m$ term. We continue the comparison empirically in the next section.

\section{Numerical Experiments}\label{sec:NUM}
We designed the following experiments for validating and demonstrating our theoretical findings. All experiments were
executed on the NVIDIA V100 cluster at HZDR.
Precomputation of the Sobolev cubature matrices is realized in \cite{repo}
as a feature of the open source package  \cite{minterpy}, which is based on our recent work
\cite{PIP1,PIP2,MIP,IEEE}.
For comparison we
benchmark the following schemes:
\begin{enumerate}
    \item[i)]  \emph{Classic PINNs (PINN)} as proposed in \cite{RAISSI2019686} resting on the
     strong $L^2$-MSE loss, Eq.~\eqref{L2}.
    \item[ii)]  \emph{Inverse Dirichlet Balancing (ID-PINNs)} with the $L^2$-MSE loss \cite{maddu2021}, as described in the introduction.
    \item[iii)] \emph{Sobolev Cubature PINNs (SC-PINNs)} with the weak $L^2$-loss for all the experiments unless specified otherwise.
    \item[iv)] \emph{Variational PINNs (VPINNs)} with the strong $L^2$-loss with and without domain decomposition, as introduced in Section\ref{sec:VPINN}. While VPINNs resulted in  incommensurate training effort in 2D only 1D experiments were executed.
\end{enumerate}
For a given ground truth function $g : \Omega \lo \R$ and a $\hat u_\theta$ approximation we measure the $l_1,l_\infty$-errors $\epsilon_1 := \|\mathfrak{g}-\mathfrak{u}\|_1$, $\epsilon_\infty:= \|\mathfrak{g}-\mathfrak{u}\|_\infty$ by sampling on
equidistant grids of size $N =100^2$ in 2D.
For (inverse) parameter inference problems we denote the parameter error  with  $\epsilon_\lambda =|\lambda -\lambda_{gt}|$.

All models are trained with the same number of training points regardless of their specification (random or Legendre grids).

\subsection{Poisson Equation}\label{sec:P}
We start by addressing the Poisson problem in dimension $m=2$
\begin{equation*}
   \li\{ \begin{array}{rll}
       -\Delta u(x) -f(x) &= 0  &,  \forall x\in\Omega=[-1,1]^2  \\
         u(x)  -g(x)      &= 0  &,  \forall   x\in\partial\Omega\,,
\end{array}\re.
\end{equation*}
described in detail Eq.~\eqref{eq:PP}.
We choose $f(x,y):= -2\lambda^2\textrm{cos}(\lambda x)\textrm{sin}(\lambda y)$ and $g(x):= \textrm{cos}(\lambda x)\textrm{sin}(\lambda y)$, with frequency $\lambda = 2\pi q$, $q=6$, yielding $u(x,y) =g(x,y)$ to be the analytic solution.

\begin{figure}[t!]
    \centering
         \begin{subfigure}
         \centering
         \includegraphics[width=0.47\textwidth]{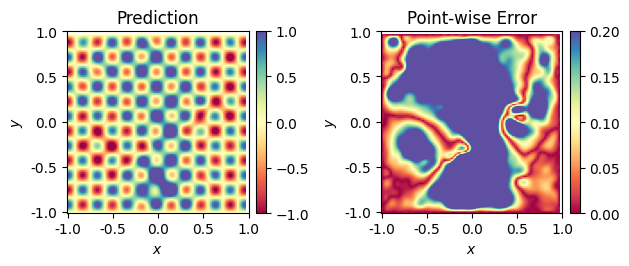}
         \caption{ID-PINN with MSE loss, reaching
         $\epsilon_{\infty} = \mbox{1.24}$, $\epsilon_1 = \mbox{2.22e-1}$, $t\approx  1173$. \label{fig:P_1}}
     \end{subfigure}
     \begin{subfigure}
         \centering
         \includegraphics[width=0.47\textwidth]{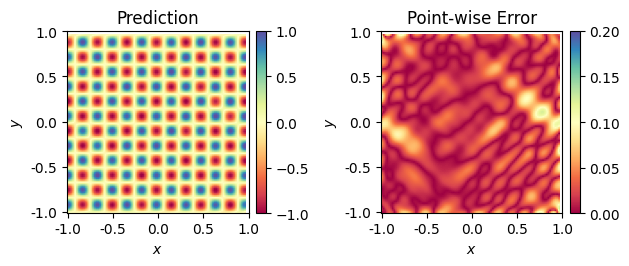}
    \caption{SC-PINN with strong Sobolev loss and A.D., reaching
    $\epsilon_{\infty} = \mbox{1.37e-1}$, $\epsilon_1 =\mbox{2.27e-2}$, $t\approx 725$.    \label{fig:P_2}}
     \end{subfigure}
     \begin{subfigure}
         \centering
         \includegraphics[width=0.47\textwidth]{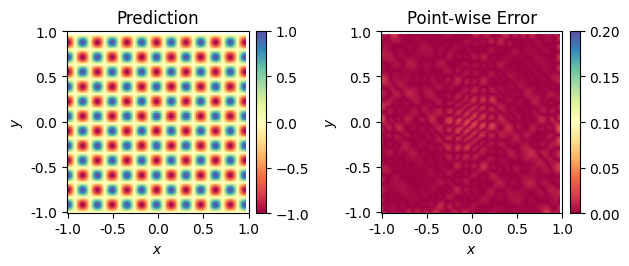}
    \caption{SC-PINN with strong Sobolev loss and P.D., reaching
    $\epsilon_{\infty} = \mbox{3.00e-2}$, $\epsilon_1 = \mbox{3.75e-3}$,  $t\approx 192$.  \label{fig:P_3}}
     \end{subfigure}
     \hfill
     \begin{subfigure}
         \centering
    \includegraphics[width=0.47\textwidth]{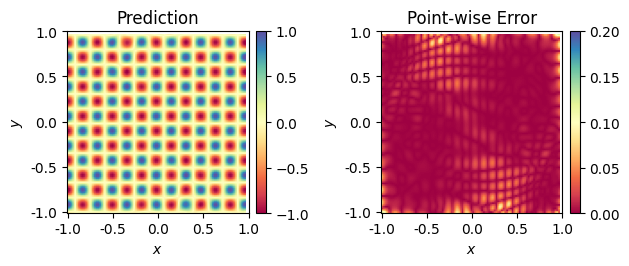}
    \caption{SC-PINN with strong variational Sobolev loss and P.D., reaching
    $\epsilon_{\infty} = \mbox{1.25e-1}$, $\epsilon_1 = \mbox{7.70e-3}$, $t\approx 188$. \label{fig:P_4}}
     \end{subfigure}
\end{figure}
We compare the performance of the following PINN implementations:
\begin{enumerate}
\item[I)] PINN and ID-PINN relying in strong \emph{mean square} (MSE) loss $\Lc_{\mathrm{MSE}}$, Eq.~\eqref{L2}.
\item[II)] SC-PINN with strong Sobolev loss $\Lc_{\mathrm{strong}}$, Eq.~\eqref{eq:strong},
$$\Lc_{\mathrm{strong}}(u) = r_{\mathrm{strong},n_r,k}(u) + s_{\mathrm{strong},n_s,l}(u)$$
for $k=0$, $l=2$,  $n_r = 30$, $n_s = 100$, in two versions: Once computing
$\Delta u$ with automatic differentiation (A.D.) and once without A.D.,
but with \emph{polynomial differentiation} (P.D.) given in Eq.~\eqref{eq:R}.
\item[III)] SC-PINN with strong variational Sobolev loss, Eq.~\eqref{eq:strongvar} $\Lc_{\mathrm{strong}}^{\mathrm{var}}$
$$\Lc_{\mathrm{strong}}^{\mathrm{var}}(u) = r_{\mathrm{strong},n_r,k}^{\mathrm{var}}(u) + s_{\mathrm{strong},n_s,l}^{\mathrm{var}}(u)$$
for $k=0$, $l=0$, $n_r = 30$, $n_s = 100$, without A.D., but with  P.D..
\end{enumerate}
All methods were implemented for fully connected feed-forward NNs, $\hat u \in \Xi_{2,1}$, of $5$ hidden layers, each of $50$ units length, unless specified otherwise. Activation functions were chosen as $\sigma(x) = \sin(x)$, which performed best compared to trials with ReLU, ELU or $\sigma(x)=\tanh(x)$.
All PINNs were trained by applying the Adam optimizer \cite{kingma2014} for $\mbox{30000}$ iterations, batch size $\mathop{bs}= |P_{m,n}|$ for ID-PINN equals SC-PINN, and learning rate of $\mathop{lr}=1\textrm{e}-3$,
whereas ID-PINN applies its dynamic gradient balancing scheme.

Approximation errors and CPU-training times $t$ are reported in   Figs.~\ref{fig:P_1}--\ref{fig:P_4}. While the classic PINN approach failed to converge (reach reasonable approximations) its results are skipped.

In comparison SC-PINN reaches several orders of magnitude better approximations: SC-PINN with $\Lc_{\mathrm{strong}}$ and A.D. improves by one order, while replacing A.D. with P.D. even increases the accuracy by one further order. In addition the CPU runtime is reduced by three fold when executing SC-PINN with P.D. instead of A.D. The choice of  $\Lc_{\mathrm{strong}}^{\mathrm{var}}$ improves the $\epsilon_1$ error by one order of magnitude compared to
SC-PINN with $\Lc_{\mathrm{strong}}$ and A.D., which requires more CPU time.

We address a further prominent PDE problems to continue our empirical investigations. From here on we only use P.D. instead of A.D. for executing the experiments with the SC-PINNs.


\subsection{Quantum Harmonic Oscillator}

The time independent \emph{Quantum Harmonic Oscillator} in dimension $m=2$, corresponds to the \emph{Schrödinger equation} with the linear potential $V(u(x)):=(x_1^2 + x_2^2)u(x)$, $u\in C^2(\Omega,\mathbb{R})$, see e.g. \cite{richard1980,griffiths2018},
given by:
\begin{equation*}
   \li\{\begin{array}{rll}
       -\Delta u(x) + V(u(x))  &= \lambda u(x)  &,  \forall x\in\Omega  \\
         u(x)  -g(x)     &= 0   &,  \forall x\in\partial\Omega\,,
\end{array}\re.
\end{equation*}

We consider the eigenvalue problem $\lambda = n_1 + n_2 + 1$, $n_1,n_2 \in \N$ with eigenfunctions
\begin{equation*}
g(x_1,x_2) = \frac{\pi^{-1/4}}{\sqrt{2^{n_1+n_2}n_1!n_2!}}e^{-\frac{(x_1^2+x_2^2)}{2}}H_{n_1}(x_1)H_{n_2}(x_2),
\end{equation*}
whereas $H_n$ denotes the $n$-th \emph{Hermite polynomial}.

We keep the experimental design from Section~\ref{sec:P}, choose $\lambda = 15$ and report the results in Fig.~\ref{fig:S1}--\ref{fig:S3}:
\begin{figure}[t!]
    \centering
     \begin{subfigure}
         \centering
         \includegraphics[width=0.47\textwidth]{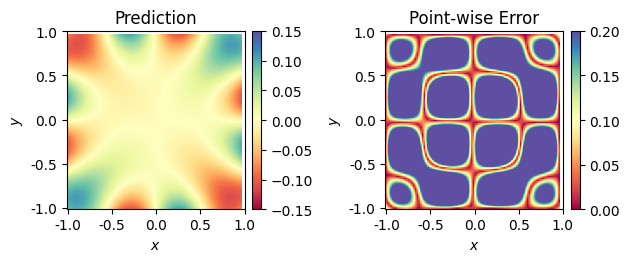}
    \caption{ID-PINN with MSE loss and A.D., reaching $\epsilon_{\infty} = \mbox{1.46e-1}$, $\epsilon_1 = \mbox{4.78e-2}$, $t\approx 905$,  \label{fig:S1}}
     \end{subfigure}
     \begin{subfigure}
         \centering
         \includegraphics[width=0.47\textwidth]{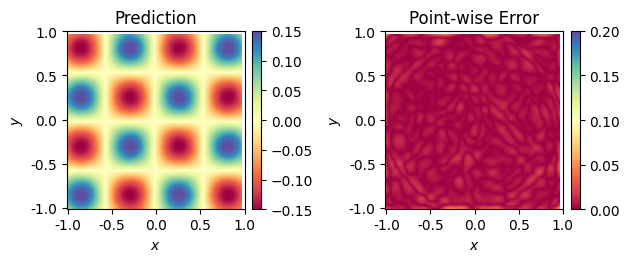}
    \caption{SC-PINN with strong Sobolev loss and P.D., reaching, $\epsilon_{\infty} = \mbox{1.22e-2}$, $\epsilon_1=\mbox{1.24e-3}$, $t\approx 165$,  \label{fig:S2}}
     \end{subfigure}
     \begin{subfigure}
         \centering
     \includegraphics[width=0.47\textwidth]{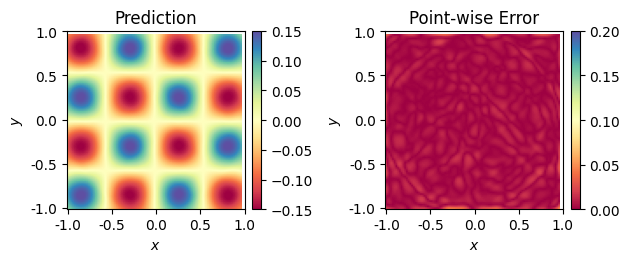}
    \caption{SC-PINN with strong variational Sobolev loss and P.D., reaching, $\epsilon_{\infty} = \mbox{7.27e-3}, \epsilon_1= \mbox{8.16e-4}, t\approx167$, \label{fig:S3}}
     \end{subfigure}
\end{figure}
Similar to the previous experiment classic PINN failed to converge, SC-PINNs improve the accuracy up to 2 orders of magnitude in 4 fold less runtime, whereas, SC-PINN with $\Lc_{\mathrm{strong}}^{\mathrm{var}}$ and P.D. performs best.


\subsection{Poisson problems with hard transitions}

We re-investigate PINN-solutions of the Poisson problem in dimension $m=1$, whose analytic solutions include hard transitions. That is, choosing
$$f(x):= C\big(A\omega^2 \sin(\omega x)+2\beta^2\mathrm{sech}^2(x)\textrm{tanh}(\beta x)\big)\,,$$
with boundary condition $g(x):= C(A\sin(\omega x)+\tanh(\beta x))$
yielding $u(x) =g(x)$ to be the analytic solution. Two scenarios were considered:
\begin{align*}
  S_1 &=\{C=0.1,A=0.1,\beta = 30,\omega= 20\pi, \mathrm{bs}=100\}\\
  S_2 &=\{C=0.1,A=0.1,\beta = 5,\omega= 26.5\pi, \mathrm{bs}=100\}
\end{align*}
\begin{figure}[t!]
    \centering
     \begin{subfigure}
         \centering
         \includegraphics[width=0.235\textwidth]{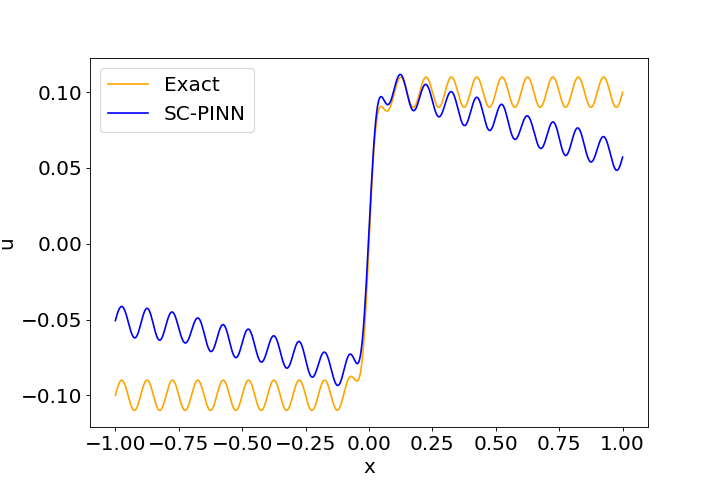}
         \includegraphics[width=0.235\textwidth]{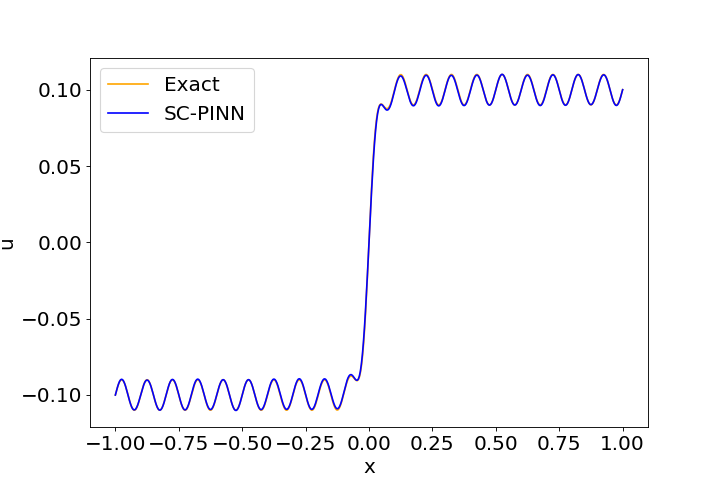}
    \caption{SC-PINN with strong Sobolev loss and P.D. (left), reaching   $\epsilon_{\infty} =\mbox{3.04}e-2$, $\epsilon_1 =\mbox{7.24}e-2,t\approx 150$,
    SC-PINN with strong variational Sobolev loss and P.D.(right), reaching   $\epsilon_{\infty} =\mbox{2.0e-3}$, $\epsilon_1 =\mbox{4.0e-4}, t\approx 151$,
    scenario $S_1$.
    \label{1d_P1}}
     \end{subfigure}
     \begin{subfigure}
         \centering
         \includegraphics[width=0.23\textwidth]{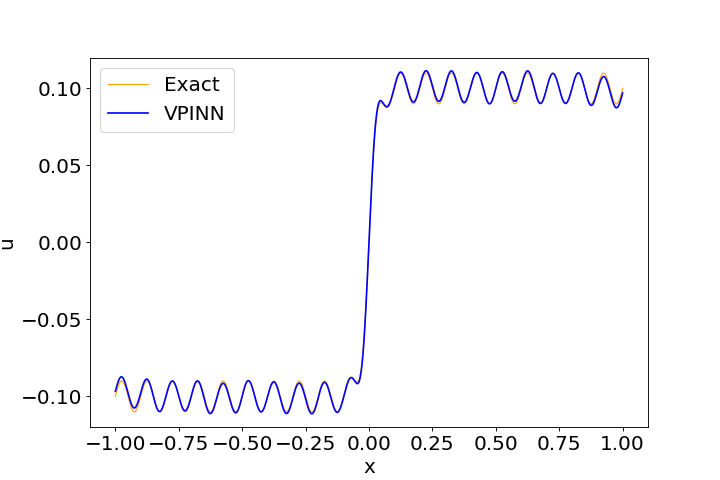}
         \includegraphics[width=0.235\textwidth]{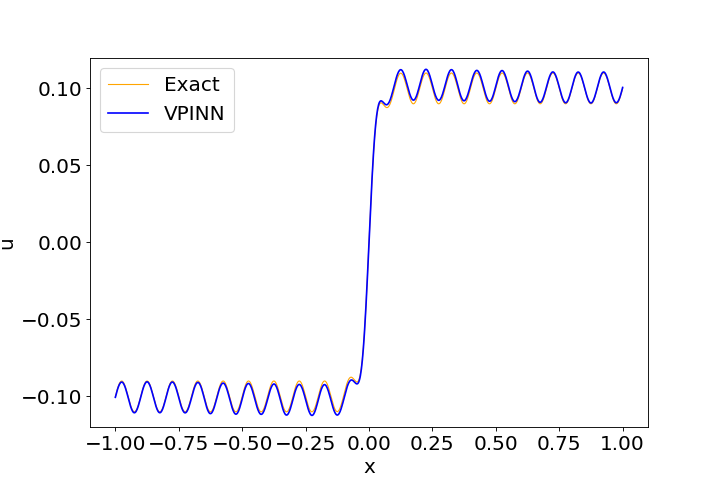}
    \caption{VPINN with $N_\mathrm{el}=1$ (left), reaching  $\epsilon_{\infty} = \mbox{3.29e-3}$, $\epsilon_1=\mbox{9.94e-4}, t\approx 96$,
    VPINN with $N_\mathrm{el}=3$ (right), reaching
    $\epsilon_{\infty} = \mbox{2.73e-3}$, $\epsilon_1=\mbox{1.40e-3}, t\approx 191$, scenario $S_1$.     \label{1d_P2}}
     \end{subfigure}
     \begin{subfigure}
         \centering
         \includegraphics[width=0.235\textwidth]{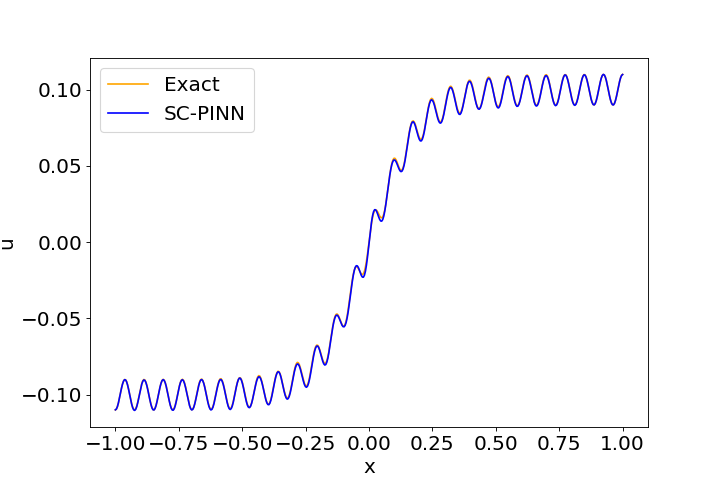}
         \includegraphics[width=0.235\textwidth]{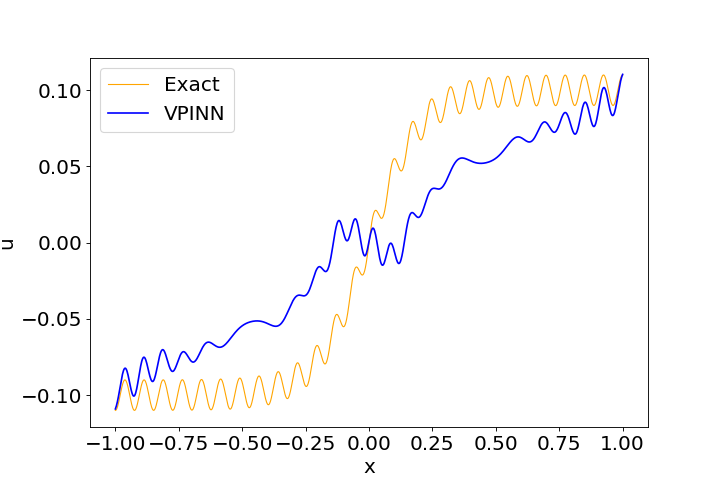}
    \caption{SC-PINN with strong variational Sobolev loss and P.D.(left), reaching   $\epsilon_{\infty} =\mbox{2.75e-3}$, $\epsilon_1 =\mbox{5.35e-4},t\approx 151$,
     VPINN with $N_\mathrm{el}=3$, reaching
    $\epsilon_{\infty} = \mbox{6.50e-2}$, $\epsilon_1=\mbox{3.40e-2}$,
    scenario $S_2$, $t\approx 180$.\\}
    \label{1d:PS2}
     \end{subfigure}
     \begin{subfigure}
         \centering
    \includegraphics[width=0.236\textwidth]{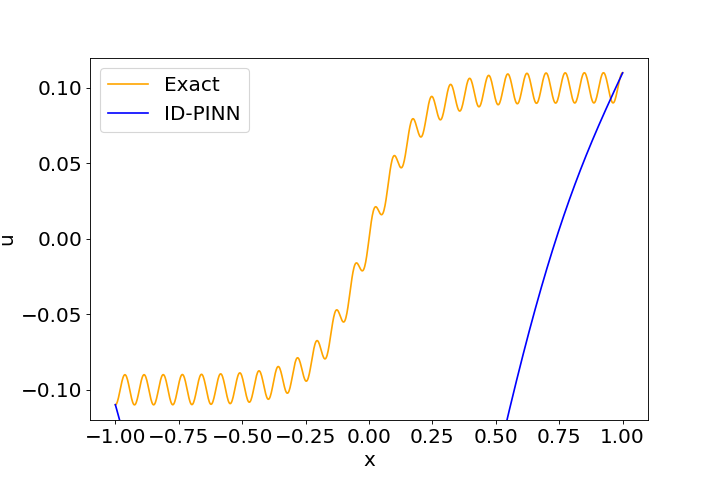}
     \includegraphics[width=0.236\textwidth]{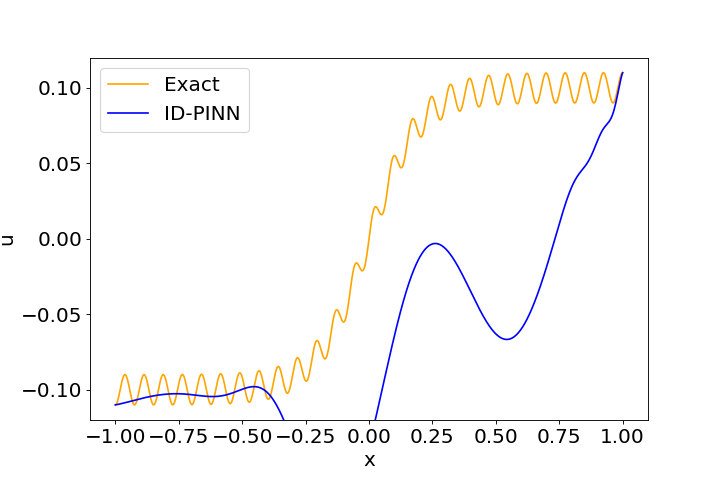}
    \caption{ID-PINN with MSE loss, $\mathrm{bs}=500$ and A.D.,(left), reaching $\epsilon_{\infty}=\mbox{4.74e-1}$, $\epsilon_1= \mbox{2.50e-1}, t\approx 158$,
    ID-PINN with MSE loss without I.D., $\mathrm{bs}=5000$ and A.D.,(right), reaching $\epsilon_{\infty}=\mbox{1.73e-1}$, $\epsilon_1= \mbox{7.13e-2}, t\approx 360$, scenario $S_2$. \label{poisson1d_MSE1}}
     \end{subfigure}
\vspace{-0.5cm}
\end{figure}

Up to reducing to $4$ hidden layers, each of $20$ units length the same NN architecture, $\hat u \in \Xi_{1,1}$,
as prior was chosen. The degree $n \in \N$ of the Sobolev losses  is set as the batch size $n=\mathrm{bs}$.
In addition to the previous PINN methods we consider the
VPINNs,
introduced in Section~\ref{sec:VPINN}, relying on the strong variational Sobolev loss and domain decomposition specified by the number of its elements $N_{\mathrm{el}} \in \N$.

Comparison of SC-PINNs and VPINNs for scenario $S_1$ is reported in Figs.~\ref{1d_P1},\ref{1d_P2}: While SC-PINN with strong Sobolev loss
misses to capture the solution, SC-PINN with variational loss reaches compatible approximations compared to VPINN with $3$-element decomposition $N_\mathrm{el}=3$.


For the second set of parameters $S_2$ results are reported in Fig.~\ref{1d:PS2}:
We observe a similar behaviour as in scenario $S_1$, but SC-PINN with strong variational loss reaches one order of magnitude better (overall) $\epsilon_1$-error compared to VPINN.
ID-PINN and PINN both fail to converge. We present the results only for the I.D. balancing, once for batch size $\mathrm{bs}=500$ and once for $\mathrm{bs}=5000$,
are reported in Fig.~\ref{poisson1d_MSE1}. We, however, observe that even by increasing the batch size by a factor of $10$ ID-PINN does not become compatible to SC-PINN.


\subsection{2D Poisson Inverse Problem}
We will now consider the inverse problem for the 2D Poisson equation, where we want to infer $\lambda$, that appears explicitly on the RHS $f(x) = \lambda \cos(\omega x)\sin(\omega y)$ for $\omega = \pi$. We used for the SC-PINN, degree $100$ and $30$ quadrature for the boundary and the domain respectively and the same amount of points randomly sampled for the ID-PINN and the standard PINN. The ground truth $\lambda_{gt} = 2\omega^2$ and the weak $L^2-$loss. We test it against the standard PINN and the ID-PINN with same number of points.
\newcommand{\tabIPP}{
\begin{tabular}[t]{llll}
Method & \multicolumn{2}{c}{Approximation error}& Runtime (s)\\
& $\epsilon_{\lambda}$ & $\epsilon_{1}$\\
\hline
  PINN    & $4.63\cdot10^{-1}$ & $1.13\cdot 10^{-2}$ & $t\approx 1592$\\
  ID-PINN & $2.14\cdot10^{-2}$ &$8.09\cdot10^{-4}$ & $t\approx 2184$ \\
  SC-PINN  & $\mathbf{3\cdot10^{-4}}$ & $\mathbf{5.49\cdot10^{-4}}$ & $t \approx \mathbf{103}$ \\
& & &\\
& & & \\
& & & \\
& & &
\end{tabular}
}

\begin{table}[ht]
\begin{minipage}[b]{0.5\textwidth}%
\centering
\includegraphics[width=0.7\textwidth]{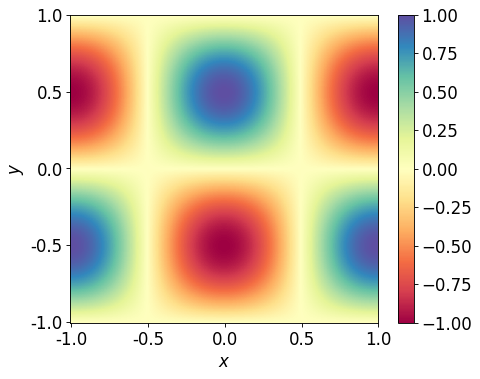}
\captionof{figure}{Solution for 2D Poisson with $\lambda_{gt} = \pi$.}%
\end{minipage}
\begin{minipage}[b]{.5\textwidth }%
\tabIPP
\vspace{-45pt}
\caption{Errors for 2D Poisson inverse problem}\label{IP_QHO}%
\end{minipage}%
\end{table}
Due to Table \ref{IP_QHO}, the SC-PINN recovers the parameter $\lambda$ with one order of magnitude higher accuracy  by requiring almost two orders of magnitude less runtime. even in the inverse problem setting, this shows the superiority of SC-PINN in both approximation and computational performance. To support this result, we consider the inverse problem for the time independent 2D QHO.

\subsection{2D QHO Inverse Problem}
We pose the QHO eigenvalue problem for unknown eigenvalue $\lambda_{\mathrm{gt}}=5$ and seek finding the eigenvalue and the PDE solution simultaneously. In this setting we use the SC-PINN with $L^2-$weak loss, and a $200$ degree cubature for the boundary and use degree $50$ in the domain. We  compare it with the ID-PINN and the standard PINN trained with the same number of (random) training points.
\newcommand{\tabQI}{
\begin{tabular}[t]{llll}
Method & \multicolumn{2}{c}{Approximation error}& Runtime (s)\\
& $\epsilon_{\lambda}$ & $\epsilon_{1}$\\
\hline
  PINN    & $6.01$ & $7.32\cdot 10^{-2}$ & $t\approx 1414$\\
  ID-PINN & $6.21\cdot10^{-2}$ &$7.51\cdot10^{-3}$ & $t\approx 1346$ \\
  SC-PINN  & $\mathbf{2.18\cdot10^{-4}}$ & $\mathbf{5.68\cdot10^{-4}}$ & $t \approx \mathbf{192}$ \\
& & &\\
& & & \\
& & &
\end{tabular}
}

\begin{table}[ht]
\begin{minipage}[b]{.5\textwidth}%
\centering
\vspace{-5pt}\includegraphics[width=0.7\textwidth]{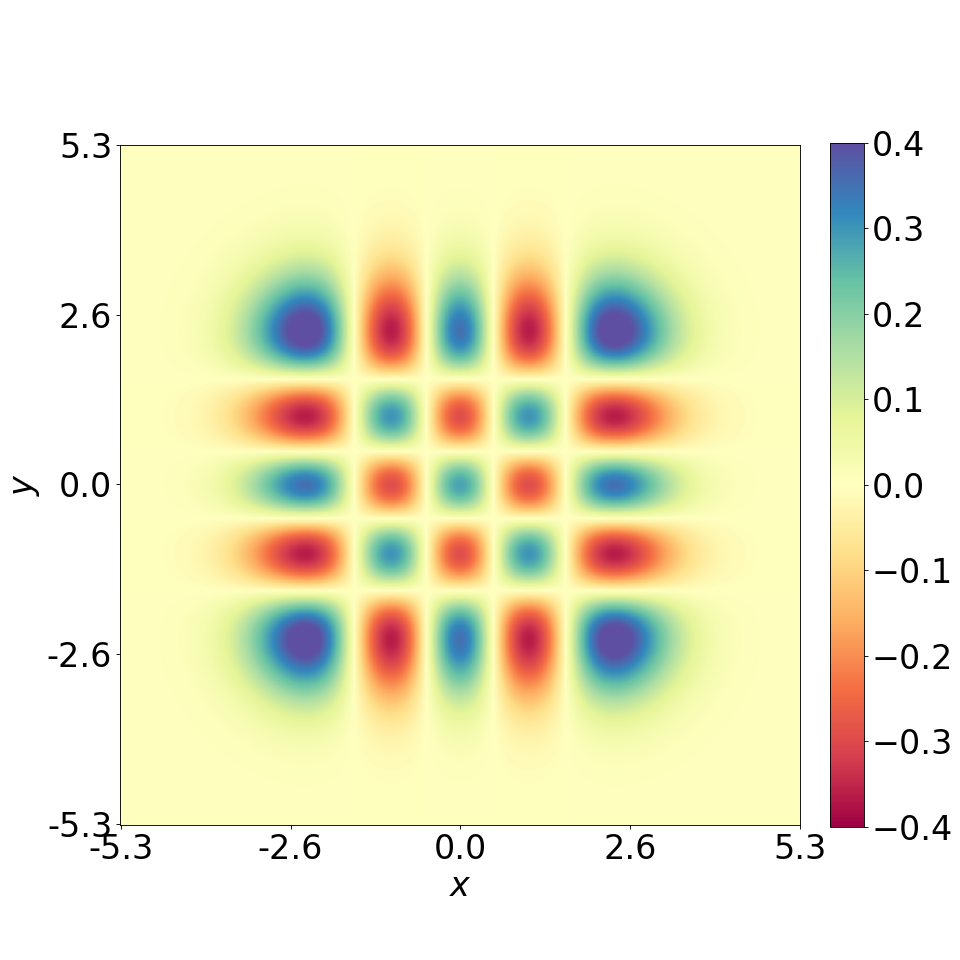}
\captionof{figure}{Solution for 2D QHO with $\lambda_{gt} = 5$.}%
\end{minipage}
\begin{minipage}[b]{.50\textwidth }%
\tabQI
\vspace{-30pt}
\caption{Errors for 2D QHO inverse problem with $\lambda_{gt}=5$}\label{IP_QHO_I}%
\end{minipage}%
\end{table}
Table \ref{IP_QHO_I} shows that SC-PINN outperforms the ID-PINN and the PINN by inferring $\lambda$ with four orders of magnitude better accuracy and recovers the PDE solution with one order of magnitude higher precision. On top, SC-PINN requires two orders of magnitude less runtime. This demonstrates once more the superiority of the method SC-PINN when applied to forward and inverse problems.

\subsection{Non-linear Burger's Equation in 1D}
We consider the time independent Burger's equation in conservative form with Dirichlet boundary conditions given by
    \begin{equation*}
   \li\{\begin{array}{rll}
       -\frac{d^2}{dxdx} u(x) +  \frac{1}{2}\frac{d}{dx} (u(x)^2)  &= f(x) &,  \forall x\in\Omega\\
         u(x)  -g(x)     &= 0   &,  \forall x\in\partial\Omega\,,
\end{array}\re.
\end{equation*}
with $f(x) := \frac{\omega}{2}\textrm{sin}(2\omega x)+\omega^2\textrm{sin}(\omega x)$\,.

We solve the PDE with a 100 degree quadrature, $\omega = 14\pi$ and the and strong variational loss for the SC-PINN norm. We test it against the PINN and the ID-PINN with same number of (random) training points.
\newcommand{\tabBF}{
\begin{tabular}[t]{llll}
Method & \multicolumn{2}{c}{Approximation error}& Runtime (s)\\
& $\epsilon_{1}$ & $\epsilon_{\infty}$\\
\hline
  PINN    & $2.2\cdot10^{-1}$ &$4.2\cdot10^{-1}$ & $t\approx 125$\\
  ID-PINN & $7.9\cdot10^{-1}$ &$2.6$ & $t\approx 138$ \\
  SC-PINN & $2.7\cdot10^{-3}$ &$8.7\cdot10^{-3}$ & $t\approx 157$\\
& & &\\
& & &
\end{tabular}
}

\begin{table}[ht]
\centering
\begin{minipage}[b]{.5\textwidth}%
\centering
\vspace{10pt}
\includegraphics[width=0.8\textwidth]{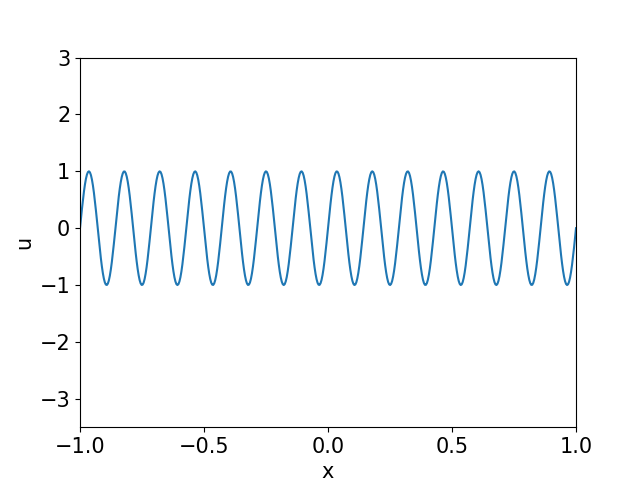}
\captionof{figure}{Solution for 1D Burger's with $\omega = 14\pi$.\label{x}}%
\end{minipage}
\begin{minipage}[b]{.50\textwidth }%
\tabBF
\vspace{-30pt}
\caption{Errors for 1D Burger's forward problem}\label{FW_BRG}%
\end{minipage}%
\end{table}
According to the results presented in Table \ref{FW_BRG} SC-PINN performs again better than the other PINN formulations by reaching 2 orders of magnitude smaller $\epsilon_1$-error. However, SC-PINN is slightly slower than the other PINNs in this 1D-setting. We focus on the runtime performance in a separate experiment below.

\newcommand{\tabPAD}{
\begin{tabular}[t]{llll}
Method & \multicolumn{2}{c}{Approximation error}& Runtime (s)\\
& $\epsilon_{\lambda}$ & $\epsilon_{1}$\\
\hline
  PINN    & $2.52\cdot10^{-1}$ & $3.93\cdot 10^{-2}$ & $t\approx 278.7$\\
  SC-PINN  & $\mathbf{1.4\cdot10^{-2}}$ & $\mathbf{2.8\cdot10^{-2}}$ & $t \approx \mathbf{25.72 }$ \\
& & &\\
& & & \\
& & &
\end{tabular}
}

\begin{table}[t!]
\begin{minipage}[b]{.4\textwidth}%
\centering
\hspace*{20pt}\includegraphics[width=0.9\textwidth]{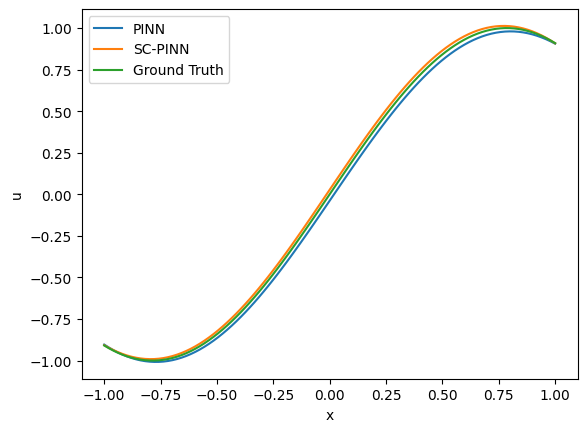}
\captionof{figure}{Solution of 4-th order ODE}%
\end{minipage}
\begin{minipage}[b]{.50\textwidth}%
\tabPAD
\vspace{-30pt}\caption{Errors for the 4th order ODE}\label{PAD}%
\end{minipage}%
\end{table}

\begin{subsection}{Polynomial Differentiation vs Automatic Differentiation}\label{sec:exAD}
We consider the forward problem of the following $4$th order ODE:
\begin{equation*}
    \frac{d^4}{dx}y = f(x), \forall x\in\Omega\,,
\end{equation*}
where $f(x) := -\omega^4 \sin(\omega x)$ with $y(0) = \sin(\omega 0)$. We benchmark the  SC-PINN with P.D. against the standard PINN with MSE loss, $bs=101$ training points, $10000$ epochs executed by the Adam optimizer \cite{kingma2014}.

\end{subsection}
As reported in Table~\ref{PAD} the SC-PINN recovers a more accurate solution of the 4th order ODE in one order of magnitude less runtime. This suggests that replacing A.D. by P.D. increases the efficiency, as predicted in our discussions in Section~\ref{sec:ADPD}, Example~\ref{ex:AD}.

\section{Conclusion}
We introduced the notion of Sobolev cubatures and gave theoretical insights in order to setup the novel Sobolev PINNs (SC-PINNs). As predicted, by the runtime complexity analysis we did, for low dimensional problems this results
in a faster PDE learning scheme than PINNs relying on automatic differentiation.

Moreover, in Theorem~\ref{theo:ss} we theoretically ensured that the SC-PINNs
converge to strong (smooth) PDE solutions for well posed problems. This result is complemented by the several order of magnitude higher accuracy the SC-PINNs reached when considering prominent  linear, non-linear, forward, and inverse PDE problems.



Depending on the numerical experiment, the choice of the Sobolev cubature differed. While we  meanwhile deepened the theoretical insights presented in this article to deliver the optimal choice  beforehand these subjects are out of scope, here, and part of a follow-up study. This includes a relaxation of the Sobolev cubatures, resisting the curse of dimensionality when addressing higher dimensional problems.

Apart from these potential enhancements the class of low dimensional, $\dim=2,3,4$, real world problems is huge. Thus, the demonstrations, here, make us believe that applying the SC-PINNs might be beneficial for many scientific applications across all disciplines.
\section{Acknowledgements}
We would like to thank Dominik Sturm from the Center for Advanced Systems Understanding, for the useful discussions and support during the whole project, as well as for providing the base framework for training the PINNs and the ID-PINNs.
\bibliography{PINNS_REF}
\bibliographystyle{icml2022}
\end{document}